\documentclass[11pt, reqno,oneside]{amsart}

\usepackage{amscd,amsmath}
\usepackage{mathrsfs}
\usepackage{amsfonts}
\usepackage{amssymb}
\usepackage{enumerate}
\usepackage{setspace}
\usepackage{color}
\usepackage{bm}
\usepackage{esint}
\usepackage{float}

\usepackage{tikz}
\usetikzlibrary{positioning,fadings,through}

\usepackage{anysize}

\usepackage[english]{babel}

\usepackage{tabu}

\usepackage[colorlinks=true, citecolor=blue]{hyperref}

\newtheorem{theorem}{Theorem} [section]

\newcommand{\eqn}{\begin{eqnarray}}
\newcommand{\een}{\end{eqnarray}}

\newcommand{\ZZ}{\mathbb{Z}}
\newcommand{\TT}{\mathbb{T}}
\newcommand{\pat}{\partial_t}

\definecolor{luh-dark-blue}{rgb}{0.0, 0.313, 0.608}

\def\jump#1{{[\hspace{-2pt}[#1]\hspace{-2pt}]}}

\newcommand{\bR}{\mathbb{R}}
\newcommand{\bT}{\mathbb{T}}
\newcommand{\bZ}{\mathbb{Z}}

\numberwithin{equation}{section}



\usepackage{dutchcal} 

\usepackage{geometry}
 \geometry{
 a4paper,
 left=25mm,
 right=25mm,
 top=3cm,
bottom=2cm,
 }

\usepackage{graphicx}

\usepackage{scalerel,stackengine}
\stackMath
\newcommand\reallywidehat[1]{%
\savestack{\tmpbox}{\stretchto{%
  \scaleto{%
    \scalerel*[\widthof{\ensuremath{#1}}]{\kern-.6pt\bigwedge\kern-.6pt}%
    {\rule[-\textheight/2]{1ex}{\textheight}}
  }{\textheight}%
}{0.5ex}}%
\stackon[1pt]{#1}{\tmpbox}%
}
\parskip 1ex

\usepackage{soul}
\usepackage{mathtools}%

\def\al{\alpha}

\newcommand{\m}[1]{{\mathbcal{#1}}}
\newcommand{\red}[1]{{\textcolor{black}{#1}}}

\newcommand{\w}[1]{{\widehat{#1}}}
\newcommand{\wb}[1]{{\overline{#1}}}

\newcommand{\mS}[1]{{S^{(#1)}}}
\newcommand{\mB}[1]{{B^{(#1)}}}
\newcommand{\mP}[1]{{P^{(#1)}}}
\newcommand{\mh}[1]{{h^{(#1)}}}

\def\q{\quad}
\def\qq{\qquad}

\def\Ga{\Gamma}

\def\lm{\lambda}
\def\de{\delta}
\def\si{\sigma}
\def\ga{\gamma}
\def\b{\beta}
\def\pa{\partial}
\def\eps{\varepsilon}
\def\N{\nabla}
\def\D{\Delta}
\def\mg{\mathbcal{h}}
\def\mg{\mathbcal{g}}

\def\cm{\t{cm}}
\def\g{\t{g}}
\def\tsi{\tilde{\si}}
\def\s{\t{s}}

\newcommand{\pare}[1]{\left( #1 \right)}
\newcommand{\norm}[1]{\left\| #1 \right\|}
\newcommand{\av}[1]{\left| #1 \right|}
\newcommand{\bra}[1]{\left[ #1 \right]}
\newcommand{\set}[1]{\left\{ #1 \right\}}

\renewcommand{\t}[1]{\text{#1}}

\allowdisplaybreaks

\begin{document}

\setlength{\abovedisplayskip}{5pt}
\setlength{\belowdisplayskip}{5pt}

\setlength{\jot}{7pt}

\title{A nonlocal equation describing tumor growth}

\author{Rafael Granero-Belinch\'on}
\address{Departamento de Matem\'aticas, Estad\'istica y Computaci\'on, Universidad de Cantabria, Spain.}
\email{rafael.granero@unican.es}

\author{Martina Magliocca}
\address{Departamento de An\'alisis Matem\'atico, Universidad de Sevilla, Spain.}
\email{mmagliocca@us.es}

\date{\today}

\subjclass[2010]{}

\keywords{Tumor growth, Free boundary problem, Nonlocal PDE, Well-posedness}

\begin{abstract}
Cancer is a very complex phenomenon that involves many different scales and situations. In this paper we consider a free boundary problem describing the evolution of a tumor colony and we derive a new asymptotic model for tumor growth. We focus on the case of a single phase tumor colony taking into account chemotactic effects in an early stage where there is no necrotic inner region. Thus, our model is valid for the case of multilayer avascular tumors with very little access to both nutrients and inhibitors or the case where the amount of nutrients and inhibitors is very similar to the amount consumed by the multilayer tumor cells. Our model takes the form of a single nonlocal and nonlinear partial differential equation for the interface of the multilayer tumor colony. Our model is able to capture chemotactic and cohesion effects and also the effect of nutrients, inhibitors and vasculation of the tumor colony. Besides deriving the model, we also \red{prove} a well-posedness result.
\end{abstract}

\maketitle

\setcounter{tocdepth}{3}

{\small \tableofcontents}

\section{Introduction}
Free boundary problems for partial differential equations are a very hot research area in Mathematical Analysis nowadays. These problems are mathematically challenging and physically interesting. Moreover, their applications are really spread, from geothermal reservoirs to tumour growth, passing through weather forecasting. Many free boundary problems arise during the interaction between two fluids or between a fluid and an elastic solid, \emph{i.e.} when the dynamics of both fluids or the fluid and the elastic solid are connected. The study of fluid \& fluid and fluid \& solid interactions are classical problems in Applied Sciences and Mathematics. In this paper, we will consider tumor growth as a free boundary problem for appropriate partial differential equations.

Cancer is a very complex phenomenon that involves many different scales and situations. Cancer starts at a primary site in the body with the mutation of a small number of cells. Some time after this, proliferating cancer cells start their cell division and the tumor grows. Some of the cells situated in the inner part of the tumor die due to absence of enough nutrients such as oxygen or glucose and a central necrotic core is formed. Usually, the closest cells to this necrotic core are viable non-proliferating cells while the outer part of the carcinoma contains the proliferating cells \cite{byrne1995growth,byrne1996growth}. Without access to the vasculature, the tumor colony may grow up to the order of $10^6$ cells.  If these cancer cells manage to grow successfully they may acquire a vascular network. Once they have access to a constant source of nutrients, the tumor can grow beyond that limit and at a faster rate \cite{sutherland1971growth}.

The importance of this illness together with the complexity of the phenomenon lead to a large number of research studies by many different research groups in Biology and Medicine but also in Physics, Mathematics or Engineering. In fact, the first mathematical models related to cancer goes back to the fifties (see \cite{araujo2004history,byrne2010dissecting} for very complete reviews). However, these first attempts do not provide any mechanical insight in the \red{growth} of tumors. Due to the large number of cells involved, a continuum modelling approach seems appropriate \cite{byrne2010dissecting}.

Although cancer research has improved the available treatments and our understanding of this disease, there is still a lack of a good mechanical description of certain phenomena. Such mathematical approach to oncology has many benefits as evidenced for instance by the work of Gatenby \& Gawlinski \cite{gatenby1996reaction} (see also \cite{anderson2008integrative,byrne2010dissecting}). There, the predictions of the mathematical models where verified experimentally later. Furthermore, as pointed out in \cite{anderson2008integrative}, mathematical models could provide unexpected insight into the underlying mechanisms and generate novel hypotheses for experimentation. The purpose of this work is to study a mechanical model of tumor growth and derive a new asymptotic model that accurately describe the evolution of the colony. In particular, we will focus on the case of a single phase tumor colony where only the tumor (without healthy surrounding tissue) is considered (see figure \ref{Fig1}). The tumor is assumed to be in an early stage where there is no necrotic inner region but we allow the tumor colony to possibly have access to the vascular network. The problem that we consider is very similar to the growth of a so called \emph{multilayer tumor}, \emph{i.e.} a colony of tumor cells cultivated in laboratory \cite{kim2004three,mueller1997three}.

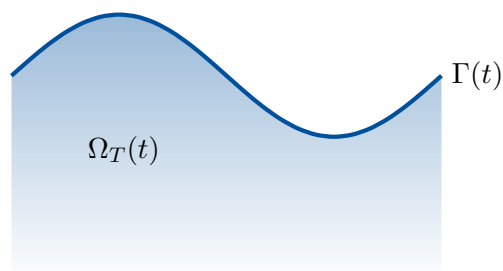
\begin{figure}[H]\label{Fig1}
\centering
\begin{tikzpicture}[domain=-pi:pi, scale=0.9]

\shade[top color=luh-dark-blue!40, bottom color=white] plot[domain=-pi:pi] (\x,{2.1-0.9*sin(\x r)}) -- plot[domain=pi:-pi] (\x,{-1});

\draw[ultra thick, smooth, variable=\x, luh-dark-blue] plot (\x,{2.1-0.9*sin(\x r)});

\node[right]  at (pi,2.1) {$\Ga(t)$};
\node at (1.4,2.5) {};
\node  at (-1.5,1) {$\Omega_T(t)$};
\end{tikzpicture}
\caption{The one-phase case with only the tumor colony and infinite depth}
\end{figure}

In that regards, when studying tumor growth there are different approaches present in the literature, each of them focusing on a specific feature of such a complex phenomenon.

First, one has the approach based on systems of nonlinear differential equations posed on a fixed domain. We could possibly call this approach the \emph{reaction-diffusion} approach. Examples of such an approach are the works \cite{bellomo2015toward,chaplain2005mathematical,gerisch2008mathematical,granero2023nonlocal,hillen2013convergence,szymanska2009mathematical,stinner2014global,tao2008global,zhigun2016global}. Typically, the system of differential equations takes the form of a coupled parabolic partial differential equation(s) and a ordinary differential equation to model haptotaxis. In general, such systems of differential equations are able to exhibit very complex and even spatio-temporal chaotic dynamics \cite{burczak2016generalized,painter2011spatio}.

Second, one can consider \emph{hybrid multiscale} and also \emph{agent based} models such as \cite{naumov2010influence,oden2016toward,rahman2017fully,rocha2018hybrid,wang2007simulating,wang2015simulating}. In this approach, partial differential equations of reaction-diffusion type are considered at the tissue level. However, at the cell level, agent based models are used to track each cell together with its state individually. In particular, these models allow to differentiate cancer cells into quiescent, proliferative, apoptotic, hypoxic, and even necrotic states. Finally, ordinary differential equations are used at the subcellular scale.

A third approach could possibly be called the \emph{phase-field} models. In this approach, the tumor microenvironment is considered as a mixture of tumor and healthy tissue. For this two-constituent medium, a function $\phi$ describes the tumor and healthy tissue volume fractions, respectively and a diffuse, typically thin, interface is considered. In this approach, the tumor is mathematically described by Cahn-Hilliard-type equations. The Cahn-Hilliard equation is a fourth order parabolic equation that was originally developed as a model phase separation and now is widely applied in tumor growth. Examples of such an approach are \cite{colli2014cahn,colli2017optimal,colli2017asymptotic,garcke2016cahn,garcke2018multiphase,lima2016selection,oden2010general,oden2016toward}.

There is however another approach which is focused on the boundary of the tumor colony. In this approach, the tumor colony is described mathematically as a \emph{free boundary problem}. These approach started with the pioneer works of Greenspan \cite{greenspan1972models,greenspan1976growth} (see also Byrne \& Chaplain \cite{byrne1996modelling,byrne1995growth,byrne1996growth}) and continues nowadays (see \cite{blanco2021modeling,conte2021modeling} and the references therein).

In particular, these early models by Greenspan study the growth of \emph{avascular}, \emph{i.e.} the carcinoma is not linked to the blood vessels, multicellular spheroids. Typically this type of tumor grows \emph{in vitro}. This situation is then described by several reaction diffusion equations for the different nutrients and inhibitors, and an ordinary differential equation for the radius of the tumor (which is assumed to be radially symmetric). This is the approach in the works of many authors such as those in \cite{cui2000analysis,cui2001analysis,friedman1999analysis,friedman2004hierarchy,friedman2007mathematical}.

Although this setting was succesful in describing certain phenomena related to cancer \red{growth} it  is also very limited. In particular, such a setting can describe how the tumor \red{grows} or how the colony diminishes, but it cannot describe observed features of certain tumors such as invasive fingers or tumor boundaries with large and variable curvature.

This paper adscribes himself to the so called free boundary approach. In this regards, we consider the problem of tumor growth as the dynamics of a free boundary. The main purpose of this paper is to derive and study a single simplified nonlinear and nonlocal partial differential equation that describe (up to certain degree of accuracy) the full free boundary dynamics in certain biological regime. A similar research program has been widely applied in many different areas in Applied Sciences. That is for instance the case of Coastal Engineering where asymptotic models of water waves are used to accurately describe the motion of the surface of the sea in the appropriate physical regime. While there are many results for free boundary problems arising when studying the motion of incompressible fluids (see for instance \cite{granero2020growth,lannes2013water,alazard2018stabilization,CCFGL,Cou-Shko} and the references therein), much less is known for free boundary problems modelling the growth of different sorts of tumors.

In this regards, the asymptotic model that we derive and study in this paper reads
\begin{align*}
\pat g(x_1,t)&=-\eta \Lambda^3 g(x_1,t)+\eps\eta\pare{\jump{H,g(x_1,t)}H g,_{111}(x_1,t)},_1\nonumber
\\
&\q -\eps\theta g(x_1,t)e^{-N_2t}\m{L},_1(x_1,0,t)+\eps \widetilde{K}^{(1)}(x_1,0,t)
+K^{(0)}(x_1,0,t).
\end{align*}
for appropriate, explicit (see below) functions $\widetilde{K}^{(1)},K^{(0)}$ and $\m{L}$. Such a model is valid up to certain explicit error (see below) in the case of an avascular tumor with very little access to both nutrients and inhibitors. Similarly, the same model is also valid in the case of vascular tumors where the amount of nutrients and inhibitors is very similar to the amount consumed by the tumor cells.

To the best of our knowledge, not only the model of tumor growth that we find is new and interesting, but also the derivation technique is different to the more heuristic derivations present in the literature. Furthermore, besides deriving a new asymptotic model for multilayer tumor growth, we also establish a well-posedness result for a specific case.

\subsection*{Notation}
We denote the one dimensional torus by $ \bT = \bR / 2\pi\bZ $. Alternatively, this domain can be thought as the interval $ [-\pi, \pi] $ with periodic boundary conditions.\\
 Given a matrix $ A \in \bR^{n\times m} $ we denote with $A_j^i $ the entry of $ A $ at row $ i $ and column $ j $, and we adopt Einstein convention for the summation of repeated indexes.\\
 We write
\[
f,_j=\pa_{x_j}f
\]
for the space derivative in the $j-$th direction. Similarly, $H$ denotes the Hilbert transform. Let $v(x_1)$ denote a $L^2$ function on $\bT$. We recall its Fourier series representation:
	\begin{equation*}
	\hat{v}(n) = \frac{1}{2\pi}\int _{\bT} v(x_1) e^{-in x_1}dx_1,
	\end{equation*}
where $ n\in\bZ $. Then we have that
\begin{equation*}
	v(x_1) =\sum_{n\in\bZ} \hat{v}(n) \ e^{in x_1}.
	\end{equation*}
Finally,
\begin{equation}\label{eq:commutator}
\jump{A,B}=AB-BA
\end{equation}
denotes the commutator.

\section{Free boundary problems describing tumor growth}
We consider the case in which the growth of cancer regions is affected by the action of certain inhibitors, whose aim is preventing the tumor expansion. These inhibitors can be a blood-borne anticancer drug or any other substance secreted by neighboring cells in response to the anomalous tissue. Then, the growth of the tumor is related to the interactions between the nutrients such as oxygen or glucose and the previously mentioned inhibitors. However, we consider that every nutrient acts via the function $\sigma$ and every inhibitor products its effect through the function $\beta$. Due to these assumptions, the system of PDE will contain two reaction diffusion equations, one for $\beta$ and one for $\sigma$.

As we said, we allow for the tumor colony to have access to the vasculature. The capillary network can supply nutrients to the tumor at a rate
$$
\delta_n(\sigma_B-\sigma),
$$
where $\sigma_B$ is the constant nutrient concentration in the vasculature, and $\delta_n$ is the constant rate of blood-tissue transfer per unit length.  We observe that the avascular case reduces then to $\delta_n=0$\\
The nutrient then diffuses in the tumor tissue with diffusion constant $D^T_n$ and it is consumed at a rate $\lambda_n \sigma$. Furthermore, the inhibitor may interact with the nutrient with a function $G_n(\sigma,\beta)$.\\
 Similarly, the inhibitor diffuses with constant $D^T_i$, it is consumed with constant $\lambda_i$ and it is supplied from the capillary network with rate $\delta_i$. \red{In the case of blood-borne anticancer drug, a blood-tissue exchange rate of the form
$$
\delta_i(\beta_B-\beta)
$$
has to be taken into account. Here, $\beta_B$ is the constant inhibitor concentration in the vasculature.}
In addition to this, the nutrient can potentially have an effect on the inhibitors via the function $G_i(\sigma,\beta)$. We can use the same reasoning with respect to the inhibitor $\beta$. Then, writing $\Omega_T(t)$ for the tumor region, these reaction diffusion equations take the generic form (see for instance \cite{byrne1995growth,lowengrub2009nonlinear})
\begin{align*}
\frac{\partial \sigma}{\partial t}-D_n\Delta \sigma&=\delta_n\left(\sigma_B-\sigma\right)-\lambda_n \sigma + G_n(\sigma,\beta)&&\text{in }\Omega_T(t)\times(0,T),\\
\frac{\partial \beta}{\partial t}-D_i\Delta \beta&=\delta_i\left(\beta_B-\beta\right)-\lambda_i \beta +G_i(\sigma,\beta)&&\text{in }\Omega_T(t)\times(0,T).
\end{align*}
The sources $G_i$ depend on the hypotheses of the model \cite{byrne1995growth}. In general, it is assumed that the inhibitor affects both cell proliferation and the nutrient concentration.\\
\red{Following \cite{byrne1995growth}, we are going to choose
\[
G_n(\si,\b)=-\ga_n \b,
\]
and
\[
G_i(\si,\b)=0,
\]
\textit{i.e.} the inhibitor affects the nutrient concentration with some constant rate $\ga_n$, but not the cell proliferation rate.\\
For more examples of choices of $G_n$ and $G_i$, we refer to \cite{byrne1995growth}.
}

The cells are assumed to move according to \emph{pressure} gradients created by the birth and death of cells  \cite{greenspan1976growth}. The introduction of this oncotic pressure function, $p$, make a conceptual change from a mechanical viewpoint. As a consequence of this hypothesis, the velocity vector of the tumor, $ u $, is assumed to follow Darcy's law
\begin{align*}
 u &=-\nabla p+\chi\nabla \sigma&&\text{in }\Omega_T(t)\times(0,T),
\end{align*}
where
$$
\chi\nabla \sigma\q \t{with}\q \chi\geq0
$$ is a chemotaxis term modelling the motion towards high concentration of nutrients. Being $\mu$ \red{a constant threshold} reflecting the expansion by mitosis and $\tilde{\sigma}$ being a threshold concentration, the local rate of volume change is given by
\begin{align*}
\nabla\cdot  u &=\mu(\sigma-\tilde{\sigma}-\tau\beta)&&\text{in }\Omega_T(t)\times(0,T),
\end{align*}
where the term $\tau\beta$ is the death rate of tumor cells due to the effect of the inhibitor. The role played by this threshold concentration $\tilde{\sigma}$ can be explained as follows. On the one hand, if the amount of nutrients is below this value in a certain region, the cells die and the tumor colony diminishes locally. On the other hand, if the amount of nutrients is above this value in certain area of the tumor tissue, the cells replicate themselves and the tumor colony \red{grows} locally.

There are a number of boundary conditions that need to be imposed for this system. First, let us denote the tumor surface by $\Gamma(t)$. We assume that the tumor colony has boundary given by
$$
\Gamma(t)=(x_1,h(x_1,t)),
$$
for certain function $h$. We write $\mathcal{K}$ for the curvature of $\Gamma(t)$, \emph{i.e.}
$$
\mathcal{K}=\frac{\partial_{x_1}^2h}{(1+(\partial_{x_1}h)^2)^{3/2}}.
$$
Then, although there are other available models \cite{byrne1996modelling}, in this work we consider that the compactness of the tumor is mantained by surface tension via a Young-Laplace equation \cite{greenspan1976growth}
\begin{align*}
p&=-\nu\mathcal{K}&&\text{on }\Gamma(t)\times(0,T).
\end{align*}
Then, as the surface of the tumor colony is transported by the cell velocity, we have that
\begin{align*}
\frac{\partial h}{\partial t}&= u \cdot (-\partial_{x_1}h(x_1,t),1)&&\text{on }\Gamma(t)\times(0,T).
\end{align*}

The appropriate boundary conditions are
\begin{align*}
\si=\si_D,\q \beta&=\beta_D&&\text{on }\Gamma(t)\times(0,T).
\end{align*}

As a consequence, under the assumption that the inhibitor acts on the nutrient concentration and the cell proliferation, the free boundary problem can be written as follows
\begin{align*}
\pat\si-D_n\Delta \sigma&=\delta_n\left(\sigma_B-\sigma\right)-\lambda_n \sigma -\gamma_n \beta && \t{in }\Omega_T(t)\times(0,T),\\
\pat\beta-D_i\Delta \beta&=\delta_i\left(\beta_B-\beta\right)-\lambda_i \beta && \t{in }\Omega_T(t)\times(0,T),\\
\N\cdot u &=\mu(\si-\tilde{\si}-\tau\beta)&& \t{in }\Omega_T(t)\times(0,T),\\
 u &=-\nabla p+\chi\nabla \si && \t{in }\Omega_T(t)\times(0,T),\\
p&= -\nu\frac{h,_{11}}{(1+h,_1^2)^{3/2}}&& \t{on }\Gamma(t)\times(0,T),\\
\pat h&= u \cdot (h,_1,1)&& \t{on }\Gamma(t)\times(0,T),\\
\si&=\si_D&& \t{on }\Gamma(t)\times(0,T),\\
\beta&=\beta_D&& \t{on }\Gamma(t)\times(0,T),
\end{align*}
with the initial data
\begin{align*}
\si(x,0)&=\si_0(x)&&\t{in }\Omega(0),\\
\beta(x,0)&=\beta_0(x)&&\t{in }\Omega(0),\\
h(x_1,0)&=\mg(x_1)&&\t{on }\Gamma(0),
\end{align*}
with
$$
\Omega_T(t)=\set{ x \in\mathbb{R}^2, x_1\in L\mathbb{S}^1,\;-\infty<x_2<h(x_1,t)},
$$
where $L$ is the typical length of the tumor colony and $L\mathbb{S}^1$ denotes the circle with length 2$L$ (or equivalently the interval $[-L\pi,L\pi]$ with periodic boundary conditions). Similarly, the boundary $\Gamma(t)$ of this domain is
$$
\Gamma(t)=\set{ x \in\mathbb{R}^2, x_1\in L\mathbb{S}^1,\;x_2=h(x_1,t)}.
$$
In the previous free boundary problem, the units of the quantities and parameters are as follows.
\begin{align*}
\t{Concentrations }\si,\,\b,\,\si_B,\,\b_B,\,\si_D,\,\b_D,\,\tilde{\si},&:\q \g\cdot\cm^{-2}\\
\t{Diffusion coefficients } D_i,\, D_n \t{ and pressure } p&:\q   \cm^{2}\cdot\s^{-1}\\
\t{Rates }\de_i,\,\de_n,\,\lm_i,\,\lm_n,\,\ga_n&:\q \s^{-1}\\
\t{Velocities } u ,\, \frac{\pa h}{\pa t}&:\q \cm\cdot\s^{-1}\\
\t{Curvature }\mathcal{K}&:\q\cm^{-1}\\
\t{Parameter }\nu&:\q\cm^3\cdot\s^{-1}\\
\t{Parameter }\chi&: \q \g^{-1}\cdot\cm^4\cdot\s^{-1}\\
\t{Parameter }\mu&: \q \g^{-1}\cdot\cm^2\cdot\s^{-1}
\end{align*}
With these units in mind, we consider the dimensionless variables
\[
\wb{x}=\pare{\frac{x_1}{L},\frac{x_2}{L}}
,\qq \wb{t}=\frac{D_n}{LH}t,
\]
and functions
\begin{equation*}
\wb{\si}(\wb{x},\wb{t})=\frac{\si(x,t)-\si_D}{\tsi},\qq\wb{ u }(\wb{x},\wb{t})=\frac{L}{D_n} u (x,t),\qq \wb{p}(\wb{x},\wb{t})=\frac{p(x,t)}{D_n},
\end{equation*}
\begin{equation*}
\wb{h}(\wb{x},\wb{t})=\frac{h(x,t)}{H},\qq \wb{\b}(\wb{x},\wb{t})=\frac{\b(x,t)-\b_D}{\tsi},
\end{equation*}
being $L$ and $H$ typical lengths associated to the tumor colony. Furthermore, we define the dimensionless parameters
\begin{equation}\label{eq:eps-al}
\eps=\frac{H}{L},\q \alpha=\frac{D_i}{D_n},
\end{equation}
\begin{equation}\label{eq:N123}
N_1=LH\frac{\de_i+\lm_i}{D_n},\q N_2=LH\frac{\de_n+\lm_n}{D_n},\q N_3=\frac{\ga_n  LH}{D_n},
\end{equation}
\begin{equation}\label{eq:M12}
M_1=\frac{LH}{D_n\tsi}\bra{-(\de_i+\lm_i)\b_D+\de_i\b_B}
,\q  M_2=\frac{LH}{D_n\tsi}
\bra{
-\si_D(\de_n+\lm_n)+\si_B\de_n-\ga_n\b_D
},
\end{equation}
\begin{equation}\label{eq:theta-rho-om-eta}
\theta=\frac{\chi\tsi}{D_n},\q \rho=\frac{\mu\tsi L^2}{D_n},\q \omega=\frac{\mu L^2}{D_n}\pare{\si_D-\tau\b_D-\tsi}
,\q \eta=\frac{\nu H}{L^2D_n}.
\end{equation}
Then, dropping the $\wb{\cdot}$ from the notation and substituting $ u $ by its value, the problem in dimensionless form reads as follows
\begin{subequations}\label{eq:pb-adim-LL_inf-1phase}
\begin{align}
\pat \beta-\eps\alpha\D \b&=-N_1\b+M_1&&\text{ in }\Omega_T(t)\times(0,T),\label{eq:bT-adim-LL_inf-1phase}\\
\pat \sigma- \eps\D \si&=-N_2\sigma-N_3\b+M_2&&\text{ in }\Omega_T(t)\times(0,T),\label{eq:siT-adim-LL_inf-1phase}\\
-\D p+\theta\D \si&=\rho\pare{\si-\tau \b}+\omega&&\text{ in }\Omega_T(t)\times(0,T),\label{eq:divu-adim-LL_inf-1phase}\\
p&= -\eta\frac{\partial_{x_1}^2h}{(1+(\eps\partial_{x_1}h)^2)^{3/2}}&& \t{ on }\Gamma(t)\times(0,T),\label{eq:p-adim-LL_inf-1phase}\\
\pat h&=\left(-\N p+\theta \N\si\right)\cdot (-\partial_{x_1}h,1)&& \t{ on }\Gamma(t)\times(0,T),\label{eq:dth-adim-LL_inf-1phase}\\
\si &=0&&\text{ on }\Gamma(t)\times(0,T),\label{eq:jumpsi-adim-LL_inf-1phase}\\
\b &= 0&&\text{ on }\Gamma(t)\times(0,T),\label{eq:jumpb-adim-LL_inf-1phase}
\end{align}
\end{subequations}
 where the dimensionless domain and boundary are
\begin{align*}
\Omega _T\pare{t}&=\set{ x \in\mathbb{R}^2,\, x_1\in \mathbb{S}^1,\, -\infty< x_2<\eps h(x_1,t)},
\\
\Ga\pare{t}&=\set{ x \in\mathbb{R}^2,\, x_1\in \mathbb{S}^1,\, x_2=\eps h(x_1,t) }.
\end{align*}

\section{The free boundary problem in Arbitrary Lagrangian-Eulerian coordinates}\label{sec-h}
In order to fix the domain, we now pass to the ALE formulation. The reference domain (see figure \ref{Fig2}) and boundary are
$$
\Omega_T=\set{ x \in\mathbb{R}^2, x_1\in \mathbb{S}^1,\;-\infty <x_2<0},
$$
and
$$
\Gamma=\set{ x \in\mathbb{R}^2, x_1\in \mathbb{S}^1,\;x_2=0}.
$$
\begin{figure}[H]
\centering
\begin{tikzpicture}[domain=-pi:pi, scale=0.9]

\shade[top color=luh-dark-blue!40, bottom color=white] plot[domain=-pi:pi] (\x,{1.2}) -- plot[domain=pi:-pi] (\x,{-1});

\draw[ultra thick, smooth, variable=\x, luh-dark-blue] plot (\x,{1.2});

\node[right] at (pi,1.2) {$\Gamma$};

\node at (-0,0.1) {$\Omega_T$};

\node at (0,2) {\t{Vacuum}};

\end{tikzpicture}
\caption{The reference domain.}
\label{Fig2}
\end{figure}
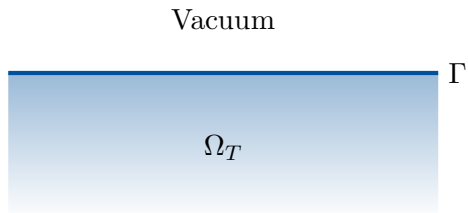
As our current goal is transforming the free boundary problem \eqref{eq:pb-adim-LL_inf-1phase} into a fixed boundary one, we introduce the time dependent diffeomorphism
\[
 \psi (\cdot,t):\Omega_T\to\Omega_T(t)
\]
defined as
\[
 \psi (x_1,x_2,t)=(x_1,x_2+\eps h(x_1,t)).
\]
Then, we are going to compose the equations in problem \eqref{eq:pb-adim-LL_inf-1phase} with $ \psi $ to find its equivalent formulation over $\Omega_T$.
Let us define the concentrations of nutrient,  inhibitor and  pressure in the fixed boundary frame as
\[
S=\si\circ \psi ,\qq B=\b\circ \psi ,\qq
 P=p\circ \psi .
\]
In order to write \eqref{eq:pb-adim-LL_inf-1phase} in terms of $S$, $B$, and $P$, we will need
\begin{equation*}
\N  \psi =\begin{pmatrix}
1&0\\\eps h,_1&1
\end{pmatrix},\qq
 \psi _t=\eps
\begin{pmatrix}
0\\\pat h
\end{pmatrix}
\qq\t{and}\qq
A= \pare{\N  \psi }^{-1}=
\begin{pmatrix}
1&0\\
-\eps h,_1&1
\end{pmatrix}.
\end{equation*}

Since we are interested in the asymptotic behavior in $\eps$, we split these equivalent equations in $S$, $B$, and $P$ as follows: we collect all the terms which depend on $\eps$ in the r.h.s., leaving the rest to the left.\\

In the following computations, we will need to compute
\[
\pat f\circ  \psi \q\t{and}\q \D f\circ \psi
\]
in terms of $F=f\circ \psi$. We use the chain rule  obtaining that
\begin{align*}
\pat f\circ  \psi &=\pat F-A_i^kF,_k \pat \psi ^i
=\pat F-\eps F,_2\pat h,
\\
 (\D f)\circ  \psi &=A_j^i\pare{A_j^kF,_k^T},_i
= \D F
+\eps \bm{R}_{1}[h]\pare{F}+\eps^2  \bm{R}_{2}[h]\pare{F},
\end{align*}
where we have used Einstein convention for summation over repeated indexes, and
\begin{align}
 \bm{R}_{1}[\ell(x_1,t)]\pare{F(x,t)}&=-\pare{\ell,_{11}(x_1,t)F(x,t)+2\ell,_1(x_1,t) F,_{1}(x,t)},_2,\label{eq:Rinf}\\
 \bm{R}_{2}[\ell(x_1,t)]\pare{F(x,t)}&=\ell,_1^2(x_1,t) F,_{22}(x,t).\nonumber
\end{align}
Reasoning in this way, we obtain the system
\begin{subequations}\label{eq:pb-ALE-LL_inf-1phase}
\begin{align}
\pat B+N_1B-M_1&=\eps\pare{B,_2\pat h+\alpha \D B}&&\nonumber\\
&\q+\eps^2\alpha     \bm{R}_{1}[h]\pare{B} &&\text{ in }\Omega_T\times(0,T),\label{eq:pb-ALE-LL_inf-1phase-patB}\\
\pat S+N_2S+N_3B-M_2&=\eps\pare{S,_2\pat h+\D S}&&\nonumber\\
&\q+\eps^2   \bm{R}_{1}[h]\pare{S} &&\text{ in }\Omega_T\times(0,T),\label{eq:pb-ALE-LL_inf-1phase-patS}\\
-\D P+\theta \D S-\rho \pare{S-\tau B}-\omega&= \eps\pare{  \bm{R}_{1}[h](P)-\theta  \bm{R}_{1}[h](S)}&&\nonumber\\
&\q+\eps^2\pare{  \bm{R}_{2}[h](P)-\theta   \bm{R}_{2}[h](P)}
&&\text{ in }\Omega_T\times(0,T),\label{eq:pb-ALE-LL_inf-1phase-DP}\\
P&= -\eta \frac{ h,_{11}}{\pare{1+\eps^2 h,_1^2}^{3/2}}&& \t{ on }\Gamma\times(0,T),\label{eq:pb-ALE-LL_inf-1phase-bcP}\\
\pat h&=-P,_2+\theta S,_2+\eps h,_1 \pare{P,_1-\theta S,_1}&&\nonumber\\
&\q-\eps^2 h,_1^2\pare{P,_2-\theta S,_2}&&\t{ on } \Ga\times(0,T),\label{eq:pb-ALE-LL_inf-1phase-path}\\
S&=0&&\t{ on } \Ga\times(0,T),\label{eq:pb-ALE-LL_inf-1phase-bcS}\\
B&=0&&\t{ on } \Ga\times(0,T).\label{eq:pb-ALE-LL_inf-1phase-bcB}
\end{align}
\end{subequations}

\section{Derivation of the asymptotic model}

\subsection{Methodology and a toy problem}
Now we are going to develop the ALE formulation of the problem in terms of $\eps$. In this way, the nonlinear ALE system will be equivalent to an infinite cascade of linear problems. As a consequence, we can recursively solve the linear problems. Furthermore, these linear problems decouple the equations in the bulk of the tumor colony and the dynamics of the free boundary. The main advantage of this approach is that, as outcome, the elliptic problems can be explicitly solved and substituted in the PDE for the tumor free boundary.

Let us show how our method works in a simpler toy problem. Let us consider the system
$$
\pat a +\eps b\partial_x a=\partial_x^2 a
$$
$$
b-\partial_x^2b=a+\eps b^2.
$$
We introduce the ansatz
$$
a(x,t)=\sum_{n=0}^\infty\eps^n a^{(n)}(x,t),\qq b(x,t)=\sum_{n=0}^\infty\eps^n b^{(n)}(x,t).
$$
The case $n=0$ reduces to
$$
\pat a^{(0)}=\partial_x^2 a^{(0)}
$$
$$
b^{(0)}-\partial_x^2b^{(0)}=a^{(0)}.
$$
This linear system can be explicitly solved using classical Fourier techniques.

The case $n=1$ then is
$$
\pat a^{(1)}+b^{(0)}\partial_x a^{(0)}=\partial_x^2 a^{(1)}
$$
$$
b^{(1)}-\partial_x^2b^{(1)}=a^{(1)}+(b^{(0)})^2,
$$
which can be explicitly solved using Duhamel principle.

If we write the explicit (nonlocal) expressions for $b^{(0)}$ (and $b^{(1)}$) in terms of $a^{(0)}$ and $a^{(1)}$, we will obtain a closed evolution equation for $a^{(1)}$. Indeed
$$
\pat a^{(1)}+G*a^{(0)}\partial_x a^{(0)}=\partial_x^2 a^{(1)},
$$
where $\widehat{G}(n)=\frac{1}{1+n^2}$. We define $A=a^{(0)}+\eps a^{(1)}$ and find
$$
\pat A+\eps G*A\partial_x A=\partial_x^2 A+O(\eps^2).
$$
In this way, we find that $A$, solving a single nonlinear and nonlocal PDE, approaches the original $a$ solving the full parabolic-elliptic toy model.

Even if the idea seems simple, in practice, for the ALE system is a challenging task. The computations for this procedure in the case of the ALE system are long and mostly tedious. As a consequence, we will skip most of them.

\subsection{Ansatz}
We introduce the ansatz
$$
h(x_1,t)=\sum_{n=0}^\infty\eps^n\mh{n}(x_1,t),\qq B(x,t)=\sum_{n=0}^\infty\eps^n \mB{n}(x,t),
$$
$$
S(x,t)=\sum_{n=0}^\infty\eps^n \mS{n}(x,t),\qq P(x,t)=\sum_{n=0}^\infty\eps^n \mP{n}(x,t),
$$
where
$$
B(x,0)=\mB{0}(x,0),\qq
S(x,0)=\mS{0}(x,0),\qq
h(x_1,0)=\mg(x_1,0),
$$
and
$$
\mS{n}(x,0)=\mB{n}(x,0)=0,\qq \mh{n}(x_1,0)=0\qq \t{for }n\ge1.
$$
We consider now the case where
$$
M_1=M_2=\omega=O(\eps^2).
$$
This is, for instance, the case of an avascular tumor with very little access to both nutrients and inhibitors. Similarly, it is the case \red{of vascular tumors} where the amount of nutrients and inhibitors is very similar to the amount consumed by the tumor cells.

\subsection{The case $n=0$}

The first term in the asymptotic series is then
\begin{subequations}\label{eq:pb-ALE-LL_inf-1phase-0}
\begin{align}
\pat \mB{0}+N_1\mB{0}&=0&&\text{ in }\Omega_T\times(0,T),\label{eq:B-ALE-LL_inf-1phase-0}\\
\pat \mS{0}+N_2\mS{0}+N_3\mB{0}&=0&&\text{ in }\Omega_T\times(0,T),\label{eq:S-ALE-LL_inf-1phase-0}\\
-\D \mP{0}+\theta \D \mS{0}-\rho \pare{\mS{0}-\tau \mB{0}}&=0
&&\text{ in }\Omega_T\times(0,T),\label{eq:DP-ALE-LL_inf-1phase-0}\\
\mP{0}&= -\eta \mh{0},_{11}&& \t{ on }\Gamma\times(0,T),\label{eq:bcP-ALE-LL_inf-1phase-0}\\
\pat \mh{0}&=-\mP{0},_2+\theta \mS{0},_2&&\t{ on } \Ga\times(0,T),\label{eq:path-ALE-LL_inf-1phase-0}\\
\mS{0}&=0&&\t{ on } \Ga\times(0,T),\label{eq:bcS-ALE-LL_inf-1phase-0}\\
\mB{0}&=0&&\t{ on } \Ga\times(0,T).\label{eq:bcB-ALE-LL_inf-1phase-0}
\end{align}
\end{subequations}
In order to simplify the notation, we set
\[
\m{B}(x)=\mB{0}(x,0)\qq\t{and}\qq \m{S}(x)=\mS{0}(x,0).
\]

\medskip

We can solve for $B$ and $S$ using, respectively, \eqref{eq:B-ALE-LL_inf-1phase-0} - \eqref{eq:bcB-ALE-LL_inf-1phase-0} and \eqref{eq:S-ALE-LL_inf-1phase-0} - \eqref{eq:bcS-ALE-LL_inf-1phase-0}, finding that
\begin{align}
\mB{0}(x,t)&=e^{-N_1t}\m{B}(x),\label{eq:Bpar0}\\
\mS{0}(x,t)&=e^{-N_2t}\m{L}(x,t),\label{eq:Spar0}
\end{align}
where
\begin{equation}\label{eq:L}
\m{L}(x,t)=\m{S}(x)-\frac{N_3}{N_2-N_1}\pare{e^{(N_2-N_1)t}-1}\m{B}(x).
\end{equation}

\medskip

The expression of the pressure $\mP{0}$ can be found applying \cite[Lemma A.1]{granero2019models} to \eqref{eq:DP-ALE-LL_inf-1phase-0}-\eqref{eq:bcP-ALE-LL_inf-1phase-0}:
\begin{align}
\mP{0}(x,t)&=-\frac{1}{\sqrt{2\pi}}\sum_{k\in\bZ}\biggl[
\frac{1}{2|k|}\pare{\int_{-\infty}^0\w{w}^{(0)}(k,y_2,t)e^{|k|y_2}\,dy_2}\pare{e^{|k|x_2}-e^{-|k|x_2}}\nonumber\\
& \q + \frac{1}{2|k|}\pare{\int_{0}^{x_2}\w{w}^{(0)}(k,y_2,t)\pare{e^{-|k|(x_2-y_2)}-e^{|k|(x_2-y_2)}}\,dy_2} \nonumber\\
& \q - \w{g^{(0)}}(k,t) e^{|k|x_2}
\biggr]e^{ikx_1},\label{eq:P0}
\end{align}
 with $\w{b}=\w{w}^{(0)},\,\w{g}=\w{g^{(0)}}$, and
\begin{align*}
\w{w}^{(0)}&=\theta \w{\D \mS{0}}-\rho \pare{\w{\mS{0}}-\tau \w{\mB{0}}},\\
\w{g^{(0)}}&= \eta|k|^2 \w{\mh{0}}.
\end{align*}

\medskip

We gather the above computations, together with \eqref{eq:path-ALE-LL_inf-1phase-0}, to say that
\begin{align}
\pat \mh{0}(x_1,t)&=-\mP{0},_2(x_1,0,t)+\theta e^{-N_2t}\m{L},_2(x_1,0,t)\nonumber\\
&=-\eta\Lambda^3\mh{0}(x_1,t)+K^{(0)}(x_1,0,t),\label{eq:path01}
\end{align}
where
\begin{align}
K^{(0)}(x_1,0,t)&=-\theta\int_{-\infty}^0 e^{y_2\Lambda} e^{-N_2t}\D\m{L}(x_1,y_2,t)dy_2\nonumber\\
&\q+\rho\int_{-\infty}^0e^{y_2\Lambda} e^{-N_2t}\left(\m{L}(x_1,y_2,t)-\tau e^{(N_2-N_1)t}\m{B} (x_1,y_2)
 \right)dy_2\nonumber\\
&\q +\theta e^{-N_2t}\m{L},_2(x_1,0,t)\label{eq:H0}.
\end{align}

\subsection{The case $n=1$}

We now consider the case $n=1$.

\subsubsection*{The expressions of $\mB{1}$ and $\mS{1}$}

The equations  \eqref{eq:pb-ALE-LL_inf-1phase-patB} - \eqref{eq:pb-ALE-LL_inf-1phase-bcB} for the second term of the asymptotic expansion reads
\begin{align*}
\pat \mB{1}+N_1\mB{1}&=\mB{0},_2\pat \mh{0}+\alpha\D\mB{0}&&\nonumber\\
&=e^{-N_1t}\pare{\m{B},_2\pat \mh{0}+\alpha \D \m{B}} &&\t{in }\Omega_T\times (0,T),\\
\mB{1}&=0&&\t{on }\Gamma\times (0,T),
\end{align*}
thanks also to the expression of $\mB{0}(x,t)$ in \eqref{eq:Bpar0}. This means that
\begin{align*}
\mB{1}(x,t)&=e^{-N_1t}\bra{\m{B},_2(x)\pare{ \mh{0}(x_1,t)- \mg(x_1)} +\alpha t\D \m{B}(x)} .
\end{align*}
We now set
\begin{equation}\label{eq:Qal}
\bm{Q}_\al[\ell(x_1,t)](f(x,t))=\pare{ \ell(x_1,t)- \mg(x_1)}f,_2(x,t)+\al t\D f(x,t),
\end{equation}
and rewrite $\mB{1}$ as
\begin{equation}\label{eq:B1}
\mB{1}(x,t)=e^{-N_1t}\bm{Q}_\al[\mh{0}(x_1,t)](\m{B}(x)).
\end{equation}
Note that, even if $\m{B}$ does not depend on $t$, $\bm{Q}_\al[\mh{0}(x_1,t)](\cdot)$ always depends on time.

\medskip

As far as \eqref{eq:pb-ALE-LL_inf-1phase-patS} - \eqref{eq:pb-ALE-LL_inf-1phase-bcS} is concerned, the computations above and \eqref{eq:Spar0} lead to
\begin{align*}
&\pat \mS{1}(x,t)+N_2\mS{1}(x,t)\\
&=-N_3\mB{1}(x,t)+\mS{0},_2(x,t)\pat \mh{0}(x_1,t)+\D \mS{0}(x,t)\\
&=-N_3e^{-N_1t}\bra{\m{B},_2(x)\pare{ \mh{0}(x_1,t)- \mg(x_1)} +\alpha t\D \m{B}(x)}\\
&\q+e^{-N_2t}\pare{\m{S},_2(x)-\frac{N_3}{N_2-N_1}\pare{e^{(N_2-N_1)t}-1}\m{B},_2(x)}\pat \mh{0}(x_1,t)\\
&\q +e^{-N_2t} \pare{\D \m{S}(x)-\frac{N_3}{N_2-N_1}\pare{e^{(N_2-N_1)t}-1}\D \m{B}(x)}
\qq\qq\t{in }\Omega_T\times (0,T),\\
&\mS{1}=0\qq\qq\qq\qq\qq\qq\qq\qq\qq\qq\qq\qq\q\,\,\t{on }\Gamma\times (0,T),
\end{align*}
so that $\mS{1}$ has the form
\begin{align*}
\mS{1}(x,t)
&=-N_3e^{-N_2t}\int_0^t e^{(N_2-N_1)s}\bra{\m{B},_2(x)\pare{ \mh{0}(x_1,s)- \mg(x_1)} +\alpha s\D \m{B}(x)}\,ds\\
&\q +e^{-N_2t}
\m{S},_2(x)\pare{ \mh{0}(x_1,t)- \mg(x_1)}
\\
&\q -\frac{N_3}{N_2-N_1}e^{-N_2t}
\m{B},_2(x)\int_0^t\pare{e^{(N_2-N_1)s}-1}\pa_s\mh{0}(x_1,s)\,ds
\\
&\q +e^{-N_2t}\bra{t\D \m{S}(x)-\frac{N_3}{N_2-N_1}\D \m{B}(x)\int_0^t e^{(N_2-N_1)s}-1\,ds}.
\end{align*}
We set
\begin{equation}\label{eq:M}
M(t)=\frac{N_3(1-\al)}{N_2-N_1}e^{-N_2t}
\bra{
t e^{(N_2-N_1)t}-\frac{1}{N_2-N_1}
\pare{e^{(N_2-N_1)t}-1}},
\end{equation}
and recall the expression of $\mS{1}$ in \eqref{eq:Qal} so that we can, after computing explicitly the previous integrals, rewrite $\mS{1}$ in compact form as
\begin{align}
\mS{1}(x,t)
&=e^{-N_2t}
\bm{Q}_1[\mh{0}(x_1,t)](\m{L}(x,t))+M(t)\D \m{B}(x)
 .\label{eq:S1}
\end{align}

\subsubsection*{The expression of $\mP{1}$}

We now focus on $\mP{1}$. Equations \eqref{eq:pb-ALE-LL_inf-1phase-DP} -- \eqref{eq:pb-ALE-LL_inf-1phase-bcP} for the second term of the asymptotic expansion read
\begin{align*}
\D \mP{1}&=\theta\D \mS{1}-\rho\pare{\mS{1}-\tau\mB{1}}-  \bm{R}_{1}[\mh{0}](\mP{0})&&\\
&\q+\theta  \bm{R}_{1}[\mh{0}](\mS{0})&&\text{ in }\Omega_T\times(0,T),\\
\mP{1}&= \eta \mh{1},_{11}&& \t{ on }\Gamma\times(0,T),
\end{align*}
where $  \bm{R}_{1}[\mh{0}]$ is given by \eqref{eq:Rinf}.\\
To find $\mP{1}$, we apply  again \cite[Lemma A.1]{granero2019models} with   $\w{b}=\w{w_{(1)}},\,\w{g}=\w{g_{(1)}}$, for
\begin{align*}
\w{w_{(1)}}&=\theta \w{\D \mS{1}}-\rho \pare{\w{\mS{1}}-\tau \w{\mB{1}}}-\w{  \bm{R}_{1}[\mh{0}]}(\mP{0})+\w{  \bm{R}_{1}[\mh{0}]}(\mS{0}),\\
\w{g_{(1)}}&= \eta|k|^2 \w{\mh{1}},
\end{align*}
 obtaining that
\begin{align*}
\mP{1}(x,t)&=-\frac{1}{\sqrt{2\pi}}\sum_{k\in\bZ}\left[
\frac{1}{2|k|}\pare{\int_{-\infty}^0\w{w_{(1)}}(k,y_2,t)e^{|k|y_2}\,dy_2}\pare{e^{|k|x_2}-e^{-|k|x_2}}\right.\nonumber\\
&\left. \q + \frac{1}{2|k|}\pare{\int_{0}^{x_2}\w{w_{(1)}}(k,y_2,t)\pare{e^{-|k|(x_2-y_2)}-e^{|k|(x_2-y_2)}}\,dy_2}\right.\nonumber\\
&\left.\q - \w{g_{(1)}}(k,t) e^{|k|x_2}
\right]e^{ikx_1}.
\end{align*}

\subsubsection*{The evolution equation of $\pat\mh{1}$}

Finally, the second term of the asymptotic expansion of  \eqref{eq:pb-ALE-LL_inf-1phase-path} is
\begin{align*}
\pat \mh{1} &=-\mP{1},_2+\theta \mS{1},_2+\mh{0},_1\pare{\mP{0},_1-\theta\mS{0},_1}&&\t{on }\Gamma\times(0,T).
\end{align*}
We want to compute the explicit expression of $\pat\mh{1}$ in terms of $\m{B}$ and $\m{S}$. To this aim, we have to develop each term in the above equation. \\
We begin focusing on $\mP{1},_2$.
\cite[Lemma A.1]{granero2019models} gives us
\begin{align*}
\mP{1},_2&=\eta\Lambda^3\mh{1}+F_1+F_2+F_3+F_4+F_5&&\t{on }\Gamma\times(0,T),
\end{align*}
where
\begin{align*}
F_1(x_1,0,t)&=\theta\int_{-\infty}^0 e^{y_2\Lambda}\D \mS{1}(x_1,y_2,t)\,dy_2,\\
F_2(x_1,0,t)&=-\rho\int_{-\infty}^0 e^{y_2\Lambda}   \mS{1}(x_1,y_2,t)\,dy_2,
\\
F_3(x_1,0,t)&=\rho\tau\int_{-\infty}^0 e^{y_2\Lambda}  \mB{1}(x_1,y_2,t) \,dy_2,\\
F_4(x_1,0,t)&=-\int_{-\infty}^0 e^{y_2\Lambda}   \bm{R}_{1}[\mh{0}](\mP{0}(x_1,y_2,t))\,dy_2,
\\
F_5(x_1,0,t)&=\int_{-\infty}^0 e^{y_2\Lambda}   \bm{R}_{1}[\mh{0}](\mS{0}(x_1,y_2,t))\,dy_2.
\end{align*}

After a careful and long computation in the Fourier variables, we find that, setting
\begin{equation}\label{eq:K1}
K^{(1)}(x_1,0,t)= -I(x_1,0,t)-K_1^{(1)}(x_1,0,t)+K_2^{(1)}(x_1,0,t),
\end{equation}
\begin{equation}\label{eq:I}
I(x_1,0,t)=\frac{1}{\sqrt{2\pi}}\sum_{k\in\ZZ} e^{ikx_1}\pare{ \w{I}_1(k,0,t)+ \w{I}_2(k,0,t)},
\end{equation}
\begin{align}
 \w{I}_1(k,0,t)
&=-
\frac{1}{2}\sum_{m\in\ZZ} \pare{\int_{-\infty}^{0}\w{w}^{(0)}(m,y_2,t)e^{|m|y_2}\,dy_2}\times\nonumber\\
&\qquad\times\int_{-\infty}^0 e^{y_2|k|} (k-m)^2\w{\mh{0}}(k-m)
\pare{e^{|m|y_2}+e^{-|m|y_2}}
\,dy_2\nonumber
\\
&  \q +\frac{1}{2} \sum_{m\in\ZZ}\int_{-\infty}^0 e^{y_2|k|} (k-m)^2\w{\mh{0}}(k-m) \times\nonumber\\
&\qquad\times
 \pare{\int_{0}^{y_2}\w{w}^{(0)}(m,y_2',t)\pare{e^{-|m|(y_2-y_2')}+e^{|m|(y_2-y_2')}}\,dy_2'}
\,dy_2,\label{eq:I1k}
\\
 \w{I}_2(k,0,t)
&=\sum_{m\in\ZZ} \pare{\int_{-\infty}^{0}\w{w}^{(0)}(m,y_2,t)e^{|m|y_2}\,dy_2}\times\nonumber\\
&\qquad\times\int_{-\infty}^0 e^{y_2|k|}
(k-m)\w{\mh{0}}
(k-m) \pare{e^{|m|y_2}+e^{-|m|y_2}}
\,dy_2\nonumber
\\
&\q-\sum_{m\in\ZZ}\int_{-\infty}^0 e^{y_2|k|}
(k-m)\w{\mh{0}}
(k-m)\times\nonumber\\
&\qquad\times\pare{\int_{0}^{y_2}\w{w}^{(0)}(m,y_2',t)\pare{e^{-|m|(y_2-y_2')}+e^{|m|(y_2-y_2')}}\,dy_2'}
\,dy_2,\label{eq:I2k}
\end{align}
\begin{align*}
\w{w}^{(0)}&=\theta\widehat{\D\mS{0}}-\rho\pare{\w{\mS{0}}-\tau\w{\mB{0}}},
\end{align*}
\begin{equation}\label{eq:K11}
K_1^{(1)}=F_1+F_2+F_3+F_5,
\end{equation}
\begin{equation}\label{eq:K12}
K_2^{(1)}=\theta e^{-N_2t}\bm{Q}_1[\mh{0}](\m{L},_2)+\theta  M(t)\D \m{B},_2-\theta\mh{0},_1e^{-N_2t}\m{L},_1.
\end{equation}
then  $\pat \mh{1}$ reads
\begin{align}\label{eq:path11}
\pat \mh{1}(x_1,t) &=-\eta\Lambda^3\mh{1}(x_1,t)+\eta\pare{\jump{H,\mh{0}(x_1,t)}H\mh{0},_{111}(x_1,t)},_1\nonumber \\
&\q+K^{(1)}(x_1,0,t).
\end{align}

\subsection{The asymptotic model}
We use the equations of $\pat\mh{0}$ in \eqref{eq:path01} and of $\pat\mh{1}$ in \eqref{eq:path11} to write the \red{equation of $\pat h$:}
\begin{align}\label{eq:sum-h}
\pat\mh{0}+\eps\pat\mh{1}&=-\eta\Lambda^3\mh{0}-\eps\eta\Lambda^3\mh{1}+\eps\eta\pare{\jump{H,\mh{0}}H\mh{0},_{111}},_1\nonumber\\
&\q+K^{(0)}-\eps K^{(1)},
\end{align}
for $K^{(0)}$ as in \eqref{eq:H0} and for $K^{(1)}$ as in \eqref{eq:K1}.\\
Now, let
\[
g=\mh{0}+\eps\mh{1}.
\]
Equation \eqref{eq:sum-h} can be written in terms of $g$ as follows
\begin{align}
\pat g(x_1,t)&=-\eta \Lambda^3 g(x_1,t)+\eps\eta\pare{\jump{H,g(x_1,t)}H g,_{111}(x_1,t)},_1\nonumber\\
&\q-\eps\theta g(x_1,t)e^{-N_2t}\m{L},_1(x_1,0,t)+\eps \widetilde{K}^{(1)}(x_1,0,t)\nonumber
\\
&\q
+K^{(0)}(x_1,0,t)
+O(\eps^2).\label{eq:patg}
\end{align}
with
\[
\widetilde{K}^{(1)}=\red{-\widetilde{I}-\widetilde{K}_1^{(1)}}+\widetilde{K}_2^{(1)},
\]
and
\begin{align}
\widetilde{I}(x_1,0,t)&=\frac{1}{\sqrt{2\pi}}\sum_{k\in\ZZ} e^{ikx_1}\pare{\widehat{\widetilde{I}}_1(k,0,t)+\widehat{\widetilde{I}}_2(k,0,t)},
\end{align}
\begin{align}
&\widehat{\widetilde{I}}_1(k,0,t)\nonumber\\
&=-
\frac{1}{2}\sum_{m\in\ZZ}  \pare{\int_{-\infty}^{0}\w{w}^{(0)}(m,y_2,t)e^{|m|y_2}\,dy_2}\times\nonumber\\
&\qquad\times\int_{-\infty}^0 e^{y_2|k|} (k-m)^2\w{g}(k-m)
\pare{e^{|m|y_2}+e^{-|m|y_2}}
\,dy_2 \nonumber
\\
&  \q +\frac{1}{2} \sum_{m\in\ZZ}\int_{-\infty}^0 e^{y_2|k|} (k-m)^2\w{g}(k-m)\times\nonumber\\
&\qquad\times
 \pare{\int_{0}^{y_2}\w{w}^{(0)}(m,y_2',t)\pare{e^{-|m|(y_2-y_2')}+e^{|m|(y_2-y_2')}}\,dy_2'}
\,dy_2, \label{eq:J1}\\
&\widehat{\widetilde{I}}_2(k,0,t)\nonumber\\
&=\sum_{m\in\ZZ}  \pare{\int_{-\infty}^{0}\w{w}^{(0)}(m,y_2,t)e^{|m|y_2}\,dy_2}\times\nonumber\\
&\qquad\times\int_{-\infty}^0 e^{y_2|k|}
(k-m)\w{g}(k-m) \pare{e^{|m|y_2}+e^{-|m|y_2}}
\,dy_2 \nonumber
\\
&\q-\sum_{m\in\ZZ}\int_{-\infty}^0 e^{y_2|k|}
(k-m)\w{g}(k-m)\times\nonumber\\
&\qquad\times\pare{\int_{0}^{y_2}\w{w}^{(0)}(m,y_2',t)\pare{e^{-|m|(y_2-y_2')}+e^{|m|(y_2-y_2')}}\,dy_2'}
\,dy_2,\label{eq:J2}
\end{align}
\begin{align}
\widetilde{K}_1^{(1)}(x_1,0,t)
&=\theta e^{-N_2t}\int_{-\infty}^0 e^{y_2\Lambda}\bm{Q}_{1}[g(x_1,t)](\D \m{L}(x_1,y_2,t))\,dy_2\nonumber\\
&\q+\theta M(t)\int_{-\infty}^0 e^{y_2\Lambda}\D^2 \m{B}(x_1,y_2)
\,dy_2\nonumber\\
&\q-\rho e^{-N_2t}\int_{-\infty}^0 e^{y_2\Lambda}\bm{Q}_{1}[g(x_1,t)]( \m{L}(x_1,y_2,t))\,dy_2\nonumber\\
&\q-\rho M(t)\int_{-\infty}^0 e^{y_2\Lambda}\D \m{B}(x_1,y_2)\nonumber
\\
&\quad+\rho\tau e^{-N_1t}\int_{-\infty}^0 e^{y_2\Lambda}\bm{Q}_{\al}[g(x_1,t)](\m{B}(x_1,y_2)) \,dy_2\nonumber\\
&\q
 -e^{-N_2t}\int_{-\infty}^0 e^{y_2\Lambda}  \bm{R}_{1}[g(x_1,t)]\pare{\m{L}(x_1,y_2,t)}\,dy_2,\label{eq:Fsum}
\end{align}
and
\begin{equation}\label{eq:K12b}
\widetilde{K}_2^{(1)}(x_1,0,t)=\theta e^{-N_2t}\bm{Q}_1[g(x_1,t)](\m{L},_2)+\theta  M(t)\D \m{B},_2-\theta g,_1(x_1,t)e^{-N_2t}\m{L},_1.
\end{equation}

As a consequence, our asymptotic model reads
\begin{align*}
\pat g(x_1,t)&=-\eta \Lambda^3 g(x_1,t)+\eps\eta\pare{\jump{H,g(x_1,t)}H g,_{111}(x_1,t)},_1\nonumber
\\
&\q-\eps\theta g(x_1,t)e^{-N_2t}\m{L},_1(x_1,0,t)+\eps \widetilde{K}^{(1)}(x_1,0,t)
+K^{(0)}(x_1,0,t).
\end{align*}

\section{An explicit particular case}
We focus our attention on the particular case where both the inhibitor and the nutrient depend only on the depth $x_2$. Such hypothesis is reasonable given the assumption of nutrients and inhibitors moving deep in the tumor via diffusion.

To simplify the computations below, we assume that inhibitor and nutrient have the same parameters, and we use the following notation:  $D_i=D_n=D$, $\de_i=\de_n=\de$, and so on. We also set $\tau=1$. In the previous sections, we supposed that $M_1=M_2=\omega=O(\eps^2)$. Then, equations
 \eqref{eq:eps-al}, \eqref{eq:N123}, \eqref{eq:M12}, \eqref{eq:theta-rho-om-eta} become
\begin{equation*}
\eps=\frac{H}{L},\qq \alpha=1,
\end{equation*}
\begin{equation}\label{eq:parameter2-x2}
N_1=N_2=N=LH\frac{\de+\lm}{D},\qq N_3=\ga\frac{LH}{D},
\end{equation}
\begin{equation*}
M_1=0
,\qq  M_2=0,\qq
\theta=\frac{\chi\tsi}{D},\q \rho=\frac{\mu\tsi L^2}{D},\q \omega=0
,\qq \eta=\frac{\nu H}{L^2D}.
\end{equation*}
Note that, with this choice of parameters, we have that the function $M(t)$ defined in \eqref{eq:M} is zero.

If the data do not depend on $x_1$, then the \cite[Lemma A1]{granero2019models} and the computations we made before apply with $k=0$. Indeed, suppose that $b=b(x_2)$ and $g\equiv0$ in \cite[Lemma A1, Eq. (66)]{granero2019models}. Hence, passing to Fourier, the equation $\D u=b$ becomes
\[
-k^2\w{u}(k,x_2)+\w{u},_{22}(k,x_2)=\w{b}(x_2).
\]
The lack of dependence on $k$ of $\w{b}$ implies that $k=0$ in the whole equation, hence $\w{u}$ verifies
\[
\w{u},_{22}(x_2)=\w{b}(x_2).
\]

We consider the following initial data:
\begin{equation*}
\m{B}(x)=c_{\m{B}} e^{x_2}\sin x_2\qquad\t{and}\qquad \m{S}(x)=c_{\m{S}}e^{x_2}\sin x_2
\qquad\t{for }c_{\m{B}},\,c_{\m{S}}>0.
\end{equation*}
We want to simplify \eqref{eq:patg} with this choice. This means that we have to find the expressions of $K^{(0)}$ and $\widetilde{K}^{(1)}$ according to these data. Explicit computations show that for these initial data, the problem reduces to the study of the following PDE
\begin{align}\label{eq:particular}
\pat g(x_1,t)
&=-\eta \Lambda^3 g(x_1,t)+\eps\eta\pare{\jump{H,g(x_1,t)}H g,_{111}(x_1,t)},_1
\nonumber\\
&\q -\eps\left(
2\theta e^{-Nt}\alpha(t)\pare{g(x_1,t)-\mg(x_1)}+e^{-Nt}\alpha(t)(2\theta-\rho)t+\rho e^{-Nt}tc_{\m{B}}\right)\nonumber\\
&\q +\theta\eps e^{-Nt}2\alpha(t)\pare{(g(x_1,t)-\mg(x_1))+t}
 -\eps\frac{\rho}{2}\alpha(t)e^{-Nt}g,_{11}(x_1,t)\nonumber\\
&\q
-\frac{e^{-Nt}}{2}\rho \pare{\alpha(t)-c_{\m{B}}},
\end{align}
being $\mg(x_1)=g(x_1,0)$.

We now prove the local well-posedness of \eqref{eq:particular}. \red{We here recall the definition of Wiener spaces}
$A^s(\TT)$
\begin{equation*}
	A^s(\TT)=\left\{u\in L^1(\TT),\q \|u\|_{A^s(\TT)}:=\sum_{k\in\ZZ} |k|^s|\widehat{u}(k)|<\infty\right\}.
\end{equation*}
\red{In the following, we use the notation
\[
\|f\|_{A^s}  =\|f\|_{A^s(\TT)}.
\]
}
Then, the main result reads

\begin{theorem}[Well-posedness]
Let $\mg\in A^1\red{(\TT)}$ be such that
$$
\norm{\mg}_{A^1}\ll 1.
$$
Then, there exists a time $T$ and a unique solution
$$
g\in C([0,T],A^1\red{(\TT)})\cap L^1(0,T;A^4\red{(\TT)})
$$
of \eqref{eq:particular}.
\end{theorem}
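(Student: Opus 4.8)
The plan is to solve \eqref{eq:particular} by a contraction mapping argument on its Duhamel formulation in the space $X_T=C([0,T];A^1(\TT))\cap L^1(0,T;A^4(\TT))$, with norm $\|g\|_{X_T}=\|g\|_{C_tA^1}+\|g\|_{L^1_tA^4}$.

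First I would simplify \eqref{eq:particular}. Writing $\mg(x_1)=g(x_1,0)$, the two terms proportional to $g-\mg$ cancel identically ($-2\eps\theta e^{-Nt}\alpha(t)(g-\mg)+2\theta\eps e^{-Nt}\alpha(t)(g-\mg)=0$), and every remaining term that is not the order-three dissipation $-\eta\Lambda^3 g$, the order-two linear term $\gamma(t)g,_{11}$ with $\gamma(t)=-\eps\tfrac\rho2\alpha(t)e^{-Nt}$, or the quadratic nonlinearity is \emph{independent of $x_1$}; hence it has vanishing $A^1$ and $A^4$ (semi)norm and only forces the zero Fourier mode of $g$, which decouples from the rest (indeed $\Lambda^3$, $\partial_{x_1}^2$ and the nonlinearity produce no zero mode and never involve $\widehat g(0)$). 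So it suffices to treat
\[
\pat g=-\eta\Lambda^3 g+\gamma(t)\,g,_{11}+\eps\eta\,\mathcal{B}(g,g),
\]
where, using the algebraic identity from the derivation of \eqref{eq:path11}, namely $\big(\jump{H,g}Hg,_{111}\big),_1=-\jump{\Lambda,g}\Lambda^3 g-g,_1(\Lambda^2 g),_1$, one computes the Fourier representation
\[
\widehat{\mathcal{B}(f,h)}(k)=-2\!\!\sum_{\operatorname{sgn}m\neq\operatorname{sgn}k}\!\!|k|\,|m|^3\,\widehat f(k-m)\,\widehat h(m).
\]

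The core of the argument is the bilinear estimate $\|\mathcal{B}(f,h)\|_{A^1}\le 2\min\{\|f\|_{A^2}\|h\|_{A^3},\,\|f\|_{A^3}\|h\|_{A^2}\}$. Its proof rests on the \emph{commutator cancellation}: the multiplier vanishes unless $m$ and $k$ have strictly opposite signs, in which case $|k-m|=|k|+|m|\ge\max\{|k|,|m|\}$, so after putting the $A^1$-weight on the output (turning $|k|$ into $|k|^2$) the two spare derivatives can be transferred to $\widehat f(k-m)$, leaving the summable factor $|k-m|^2|m|^3$, i.e. $\|f\|_{A^2}\|h\|_{A^3}$ (the other split is obtained symmetrically). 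Combining this with the elementary $\ell^1$-Hölder interpolation $\|g\|_{A^2}\le\|g\|_{A^1}^{2/3}\|g\|_{A^4}^{1/3}$ and $\|g\|_{A^3}\le\|g\|_{A^1}^{1/3}\|g\|_{A^4}^{2/3}$, and Young's inequality, yields the \emph{tame} bounds $\|\mathcal{B}(g,g)\|_{A^1}\le 2\|g\|_{A^1}\|g\|_{A^4}$ and, for the difference $\mathcal B(g_1,g_1)-\mathcal B(g_2,g_2)=\mathcal B(g_1-g_2,g_1)+\mathcal B(g_2,g_1-g_2)$, the corresponding polarized estimate bounded by $\|g_1-g_2\|_{X_T}\big(\|g_1\|_{X_T}+\|g_2\|_{X_T}\big)$ up to a constant. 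On the linear side I would record the standard Wiener-space estimates for the multiplier semigroup $e^{-\eta\Lambda^3 t}$ (symbol $e^{-\eta|k|^3t}$): $\|e^{-\eta\Lambda^3 t}f\|_{A^{s+\sigma}}\le C_\sigma(\eta t)^{-\sigma/3}\|f\|_{A^s}$ for $\sigma\ge 0$ (from $\sup_{x>0}x^\sigma e^{-\eta x^3 t}\lesssim(\eta t)^{-\sigma/3}$), and $\int_0^T\|e^{-\eta\Lambda^3 t}f\|_{A^{s+3}}\,dt\le\eta^{-1}\|f\|_{A^s}$ (from $\int_0^\infty|k|^3e^{-\eta|k|^3t}\,dt=\eta^{-1}$), together with $\|e^{-\eta\Lambda^3 t}f\|_{A^s}\le\|f\|_{A^s}$.

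With these tools I would set up $\Phi(g)(t)=e^{-\eta\Lambda^3 t}\mg+\int_0^t e^{-\eta\Lambda^3(t-s)}\big(\gamma(s)g,_{11}(s)+\eps\eta\,\mathcal{B}(g(s),g(s))\big)\,ds$. Fubini plus the semigroup estimates give, for $g\in X_T$,
\[
\|\Phi(g)\|_{X_T}\le C_\eta\|\mg\|_{A^1}+C\,\eta^{-2/3}T^{1/3}\sup_{[0,T]}|\gamma|\,\|g\|_{X_T}+2\eps\,\|g\|_{C_tA^1}\,\|g\|_{L^1_tA^4},
\]
the order-two term carrying the power $T^{1/3}$ because only two of the three available derivatives are spent against $e^{-\eta\Lambda^3(t-s)}$ (so a locally integrable singularity $(t-s)^{-2/3}$ is produced and integrated), and the nonlinear term being controlled by the tame bilinear bound (the powers of $\eta$ cancelling out). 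Working in the ball $B_R=\{\|g\|_{X_T}\le R\}$ with $R=2C_\eta\|\mg\|_{A^1}$: the middle term is $<R/4$ once $T$ is chosen small, and the nonlinear term is $<R/4$ precisely because $\|\mg\|_{A^1}\ll1$ forces $2\eps R<1/4$; the analogous computation for $\Phi(g_1)-\Phi(g_2)$, using the polarized bilinear estimate, gives a contraction. Banach's fixed point theorem then yields the unique solution in $B_R$; uniqueness in the whole class follows from the same bilinear estimate applied to the difference of two solutions together with a Gronwall/continuity-in-$T$ argument, and the continuity $t\mapsto g(t)\in A^1$ is the standard regularity of a Duhamel integral with an $L^1_t$ source. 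The main obstacle is exactly that the nonlinear term $\eps\eta\big(\jump{H,g}Hg,_{111}\big),_1$ is of the \emph{same order} (three) as the dissipation $-\eta\Lambda^3 g$, so the equation is only borderline-quasilinear; it is solely the commutator cancellation above that downgrades it to a tame quadratic term with the dangerous $A^4$-norm multiplied by the small $A^1$-norm, and without the smallness hypothesis on $\|\mg\|_{A^1}$ this term could not be absorbed.
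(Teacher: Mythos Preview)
Your proposal is correct. The key bilinear estimate you isolate---that the symbol of $(\jump{H,g}Hg,_{111}),_1$ is supported on $\{\operatorname{sgn} m\neq\operatorname{sgn} k\}$, whence $|k-m|=|k|+|m|$ and two derivatives can be shifted onto the low-frequency factor---is exactly the mechanism the paper exploits, and your observation that the $(g-\mg)$ terms cancel and that all remaining forcing is $x_1$-independent (hence feeds only the decoupled zero mode) is a clean simplification that the paper does not make explicit.

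The route, however, is genuinely different. The paper runs an \emph{a priori} energy argument: it differentiates $\|g(t)\|_{\dot A^1}$ in time, bounds the nonlinearity directly by $2\eta\|g\|_{A^1}\|g\|_{A^4}$, treats the second-order term $\gamma(t)g,_{11}$ by interpolation $\|g\|_{\dot A^3}\le\|g\|_{\dot A^1}^{1/3}\|g\|_{\dot A^4}^{2/3}$ and Young's inequality, and closes a differential inequality of the form $\frac{d}{dt}\|g\|_{\dot A^1}+c\eta\|g\|_{\dot A^4}\le e^{-Nt}\gamma(t)(\|g\|_{\dot A^1}+1)$ under the smallness assumption; existence then follows by a (tacit) regularization and compactness step, and uniqueness from the parabolic gain. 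Your approach instead builds the solution constructively by Banach's fixed point on the Duhamel formulation in $X_T=C_tA^1\cap L^1_tA^4$, using the explicit smoothing of $e^{-\eta\Lambda^3 t}$. What you gain is a self-contained construction that bypasses approximation schemes and gives contraction (hence uniqueness) for free in the ball; what the paper's argument buys is a slightly more transparent differential inequality that makes the role of the forcing terms and the possible norm growth (and hence the \emph{local} rather than global nature of the result) more visible. Both hinge on the same commutator cancellation and the same smallness condition $\|\mg\|_{A^1}\ll 1$, so neither is more general.
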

\begin{proof}
\textbf{Sketch of the proof:} The proof follows the ideas in \cite{granero2020asymptotic}. Finally, we need to impose a smallness asumption on the initial data to close the estimates. Furthermore, due to the forcing induced by the nutrients and inhibitors, our estimates allow to a controlled norm growth which eventually imply that the smallness assumption on the initial data does not propagate in time. As a consequence, we have a local in time existence for small initial data.

This goal is achieved in different steps.\\

\textbf{A priori estimate on the mean of $g$:} We compute
\begin{align*}
	\frac{d}{dt}\int_{\mathbb{T}}g(x,t)dx&\leq e^{-Nt}\gamma(t)\left|\int_{\mathbb{T}}g(x,t)dx\right|+\gamma(t)e^{-Nt},
\end{align*}
for certain (explicit) $\gamma(t)$ depending on the initial data and the parameters of the problem that grows linearly in time and may change from line to line.

\textbf{A priori estimate on the derivative of $g$:}
We will make use of the following Poincar\'e type inequalities
\begin{align*}
\|g\|_{A^1}&\leq \av{\int_{\mathbb{T}}g(x,t)dx}+\|g\|_{\dot{A}^1},\\
\|g\|_{\dot{A}^0}&\leq \|g\|_{\dot{A}^1}.
\end{align*}
We have that
\begin{align*}
\sum_{k\in\ZZ}	|k|\pat\av{ \w{g}(k,t)}&=\frac{d}{dt}\norm{g(t)}_{\dot{A}^1},
\\
-\eta\sum_{k\in\ZZ}|k|^4\av{\w{g}(k,t)}&=-\eta\norm{g(t)}_{\dot{A}^4},
\\
2\eta\sum_{k\in\ZZ}\sum_{m\in\ZZ }|k||m||k-m|^3\av{\w{g}(k-m,t)}\av{\w{g}(m,t)}&\le 2\eta\norm{g(t)}_{A^1}\norm{g(t)}_{A^4}.
\end{align*}
Thus,
\begin{align*}
\frac{d}{dt}\norm{g(t)}_{\dot{A}^1}&\le-\eta\pare{1-2\norm{g(t)}_{\dot{A}^1}}\norm{g(t)}_{\dot{A}^4}+e^{-Nt}\pare{\norm{g(t)}_{\dot{A}^3}+\|\mg\|_{\dot{A}^1}+1} \gamma(t)
.
\end{align*}
Using interpolation, we find that
\begin{align*}
\frac{d}{dt}\norm{g(t)}_{\dot{A}^1}&\le-\eta\pare{1-2\norm{g(t)}_{\dot{A}^1}}\norm{g(t)}_{\dot{A}^4}\\
&\q+e^{-Nt}\pare{\norm{g(t)}_{\dot{A}^1}^{1/3}\norm{g(t)}_{\dot{A}^4}^{2/3}+\|\mg\|_{\dot{A}^1}+1} \gamma(t)
,
\end{align*}
and, using Young inequality, we conclude that
\begin{align*}
\frac{d}{dt}\norm{g(t)}_{\dot{A}^1}&\le-\eta\pare{1-\gamma(t)\delta e^{-Nt}-2\norm{g(t)}_{\dot{A}^1}}\norm{g(t)}_{\dot{A}^4}\\
&\q+e^{-Nt}\pare{\frac{1}{\delta}\norm{g(t)}_{\dot{A}^1}+\|\mg\|_{\dot{A}^1}+1} \gamma(t)
,
\end{align*}
for $\delta>0$ artbitrary. Using the smallness of the initial data together with taking a small enough $\delta>0$
we obtain that
\begin{align*}
\frac{d}{dt}\norm{g(t)}_{\dot{A}^1}+c\eta\norm{g(t)}_{\dot{A}^4}&\le
e^{-Nt}\gamma(t)(\|g(t)\|_{\dot{A}^1}+1).
\end{align*}

\textbf{Closing the estimates:} Collecing the previous estimates we find the inequality
\begin{align*}
\frac{d}{dt}\norm{g(t)}_{A^1}&\le
e^{-Nt}\gamma(t)(\|g(t)\|_{A^1}+1).
\end{align*}
From where a uniform time of existence $0<T\ll 1$ can be concluded. This time cannot, in general, be taken arbitrarily large because the smallness condition may not be satisfied for large time intervals.

\textbf{Uniqueness:} The uniqueness follows from a standard contradiction argument together with the parabolic gain of regularity.

\end{proof}

\section{Discussion}
In this paper we have developed a new nonlinear and nonlocal PDE modelling the dynamics of a multilayer tumor colony in an early stage (where there is no necrotic inner region) \cite{kim2004three,mueller1997three} under one of the following set of hypotheses:
\begin{enumerate}
\item in the case of an avascular tumor with very little access to both nutrients and inhibitors
\item or in the case \red{of vascular tumors} where the amount of nutrients and inhibitors is very similar to the amount consumed by the tumor cells.
\end{enumerate}
Remarkably, if the stantard approach based in conservation laws and heuristic chemical balances leads to systems of nonlinear and nonlocal PDEs with a free boundary in several space dimensions, our approach arrives to a single nonlinear and nonlocal PDE capturing the whole dynamics up to an explicit error term. In the simplest case of depth depending nutrient and inhibitir, our PDE reads
\begin{align*}
&\pat g(x_1,t)\nonumber\\
&=-\eta \Lambda^3 g(x_1,t)+\eps\eta\pare{\jump{H,g(x_1,t)}H g,_{111}(x_1,t)},_1
\nonumber\\
&\q -\eps\left(
2\theta e^{-Nt}\alpha(t)\pare{g(x_1,t)-\mg(x_1)}+e^{-Nt}\alpha(t)(2\theta-\rho)t+\rho e^{-Nt}tc_{\m{B}}\right)\nonumber\\
&\q +\theta\eps e^{-Nt}2\alpha(t)\pare{(g(x_1,t)-\mg(x_1))+t}
 -\eps\frac{\rho}{2}\alpha(t)e^{-Nt}g,_{11}(x_1,t)\\
&\q
-\frac{e^{-Nt}}{2}\rho \pare{\alpha(t)-c_{\m{B}}}.
\end{align*}
Even if rather intrincate, the previous PDE opens the door to a more detailed mathematical analysis of the tumor growth dynamics. A first step in this direction is the well-posedness result that we have proved in this paper. Moreover, another advantage of our approach appears when trying to simulate the dynamics of the tumor colony. Indeed, instead of having to code a two-dimensional free boundary evolution equation, we have to simulate a single one-dimensional PDE with periodic boundary conditions. This later task could be achieved with a spectral Fourier collocation method and it should probably done in a future work.

\section*{Acknowledgment}
R.G-B was supported by the project "Mathematical Analysis of Fluids and Applications" Grant PID2019-109348GA-I00 funded by MCIN/AEI/ 10.13039/501100011033 and acronym "MAFyA". This publication is part of the project PID2019-109348GA-I00 funded by MCIN/ AEI /10.13039/501100011033. R.G-B is also supported by a 2021 Leonardo Grant for Researchers and Cultural Creators, BBVA Foundation. The BBVA Foundation accepts no responsibility for the opinions, statements, and contents included in the project and/or the results thereof, which are entirely the responsibility of the authors.\\
 The work of M.M. was partially supported by a public grant as part of the Investissement d'avenir project, FMJH, and by Grant RYC2021-033698-I, funded by the Ministry of Science and Innovation/State Research Agency/10.13039/501100011033 and by the European Union "NextGenerationEU/Recovery, Transformation and Resilience Plan". Both authors are funded by  the project "An\'alisis Matem\'atico Aplicado y Ecuaciones Diferenciales" Grant PID2022-141187NB-I00 funded by MCIN /AEI /10.13039/501100011033 / FEDER, UE and acronym "AMAED". This publication is part of the project PID2022-140494NA-I00 funded by MCIN/ AEI /10.13039/501100011033.



\begin{thebibliography}{00}
\bibitem{alazard2018stabilization}
T. Alazard.
\newblock Stabilization of gravity water waves.
\newblock {\em Journal de Math{\'e}matiques Pures et Appliqu{\'e}es},
  114:51--84, 2018.

\bibitem{anderson2008integrative}
A. Anderson and V. Quaranta.
\newblock Integrative mathematical oncology.
\newblock {\em Nature Reviews Cancer}, 8(3):227, 2008.

\bibitem{araujo2004history}
R. P. Araujo and D. L. S. McElwain.
\newblock A history of the study of solid tumour growth: the contribution of
  mathematical modelling.
\newblock {\em Bulletin of mathematical biology}, 66(5):1039--1091, 2004.

\bibitem{bellomo2015toward}
N. Bellomo, A. Bellouquid, Y. Tao, and M. Winkler.
\newblock Toward a mathematical theory of keller--segel models of pattern
  formation in biological tissues.
\newblock {\em Mathematical Models and Methods in Applied Sciences},
  25(09):1663--1763, 2015.

\bibitem{blanco2021modeling}
B. Blanco, J. Campos, J. Melchor, and J. Soler.
\newblock Modeling interactions among migration, growth and pressure in tumor
  dynamics.
\newblock {\em Mathematics}, 9(12):1376, 2021.

\bibitem{burczak2016generalized}
J. Burczak and R. Granero-Belinch{\'o}n.
\newblock On a generalized doubly parabolic keller--segel system in one spatial
  dimension.
\newblock {\em Mathematical Models and Methods in Applied Sciences},
  26(01):111--160, 2016.

\bibitem{byrne1996growth}
H. M. Byrne and M. A. J. Chaplain.
\newblock Growth of necrotic tumors in the presence and absence of inhibitors.
\newblock {\em Mathematical biosciences}, 135(2):187--216, 1996.

\bibitem{byrne2010dissecting}
H. M.  Byrne.
\newblock Dissecting cancer through mathematics: from the cell to the animal
  model.
\newblock {\em Nature Reviews Cancer}, 10(3):221, 2010.

\bibitem{byrne1995growth}
H. M. Byrne and M. A. J. Chaplain.
\newblock Growth of nonnecrotic tumors in the presence and absence of
  inhibitors.
\newblock {\em Mathematical biosciences}, 130(2):151--181, 1995.

\bibitem{byrne1996modelling}
H. M. Byrne and M. A. J. Chaplain.
\newblock Modelling the role of cell-cell adhesion in the growth and
  development of carcinomas.
\newblock {\em Mathematical and Computer Modelling}, 24(12):1--17, 1996.

\bibitem{CCFGL}
A. Castro, D. C{\'o}rdoba, C. Fefferman, F. Gancedo, and
M. L{\'o}pez-Fern{\'a}ndez.
\newblock Rayleigh-taylor breakdown for the muskat problem with applications to
  water waves.
\newblock {\em Annals of Mathematics}, pages 909--948, 2012.

\bibitem{chaplain2005mathematical}
M. A. J. Chaplain and G. Lolas.
\newblock Mathematical modelling of cancer cell invasion of tissue: The role of
  the urokinase plasminogen activation system.
\newblock {\em Mathematical Models and Methods in Applied Sciences},
  15(11):1685--1734, 2005.

\bibitem{colli2014cahn}
P. Colli, G. Gilardi, and D. Hilhorst.
\newblock On a cahn-hilliard type phase field system related to tumor growth.
\newblock {\em arXiv preprint arXiv:1401.5943}, 2014.

\bibitem{colli2017asymptotic}
P. Colli, G. Gilardi, E. Rocca, and J. Sprekels.
\newblock Asymptotic analyses and error estimates for a cahn-hilliard type
  phase field system modelling tumor growth.
\newblock {\em Discrete \& Continuous Dynamical Systems-Series S}, 10(1), 2017.

\bibitem{colli2017optimal}
P. Colli, G. Gilardi, E. Rocca, and J. Sprekels.
\newblock Optimal distributed control of a diffuse interface model of tumor
  growth.
\newblock {\em Nonlinearity}, 30(6):2518, 2017.

\bibitem{conte2021modeling}
M. Conte, S. Casas-Tint{\`o}, and J. Soler.
\newblock Modeling invasion patterns in the glioblastoma battlefield.
\newblock {\em PLoS computational biology}, 17(1):e1008632, 2021.

\bibitem{Cou-Shko}
D. Coutand and S. Shkoller.
\newblock On the finite-time splash and splat singularities for the 3-d
  free-surface euler equations.
\newblock {\em Communications in Mathematical Physics}, 325(1):143--183, 2014.

\bibitem{cui2000analysis}
S. Cui and A. Friedman.
\newblock Analysis of a mathematical model of the effect of inhibitors on the
  growth of tumors.
\newblock {\em Mathematical Biosciences}, 164(2):103--137, 2000.

\bibitem{cui2001analysis}
S. Cui and A. Friedman.
\newblock Analysis of a mathematical model of the growth of necrotic tumors.
\newblock {\em Journal of Mathematical Analysis and Applications},
  255(2):636--677, 2001.

\bibitem{friedman2004hierarchy}
A. Friedman.
\newblock A hierarchy of cancer models and their mathematical challenges.
\newblock {\em Discrete and Continuous Dynamical Systems Series B},
  4(1):147--160, 2004.

\bibitem{friedman2007mathematical}
A. Friedman.
\newblock Mathematical analysis and challenges arising from models of tumor
  growth.
\newblock {\em Mathematical Models and Methods in Applied Sciences},
  17(supp01):1751--1772, 2007.

\bibitem{friedman1999analysis}
A. Friedman and F. Reitich.
\newblock Analysis of a mathematical model for the growth of tumors.
\newblock {\em Journal of mathematical biology}, 38:262--284, 1999.

\bibitem{garcke2018multiphase}
H. Garcke, K.-F. Lam, R. N{\"u}rnberg, and E. Sitka.
\newblock A multiphase Cahn--Hilliard--Darcy model for tumour growth with
  necrosis.
\newblock {\em Mathematical Models and Methods in Applied Sciences},
  28(03):525--577, 2018.

\bibitem{garcke2016cahn}
H. Garcke, K.-F. Lam, E. Sitka, and V. Styles.
\newblock A cahn--hilliard--darcy model for tumour growth with chemotaxis and
  active transport.
\newblock {\em Mathematical Models and Methods in Applied Sciences},
  26(06):1095--1148, 2016.

\bibitem{gatenby1996reaction}
R. Gatenby and E. Gawlinski.
\newblock A reaction-diffusion model of cancer invasion.
\newblock {\em Cancer research}, 56(24):5745--5753, 1996.

\bibitem{gerisch2008mathematical}
A. Gerisch and M. A. J. Chaplain.
\newblock Mathematical modelling of cancer cell invasion of tissue: local and
  non-local models and the effect of adhesion.
\newblock {\em Journal of Theoretical Biology}, 250(4):684--704, 2008.

\bibitem{granero2023nonlocal}
R. Granero-Belinch{\'o}n.
\newblock A nonlocal model describing tumor angiogenesis.
\newblock {\em Nonlinear Analysis}, 227:113180, 2023.

\bibitem{granero2020growth}
R. Granero-Belinch{\'o}n and O. Lazar.
\newblock Growth in the muskat problem.
\newblock {\em Mathematical Modelling of Natural Phenomena}, 15:7, 2020.

\bibitem{granero2019models}
R. Granero-Belinch{\'o}n and S. Scrobogna.
\newblock Models for damped water waves.
\newblock {\em SIAM Journal on Applied Mathematics}, 79(6):2530--2550, 2019.

\bibitem{granero2020asymptotic}
R. Granero-Belinch{\'o}n and S. Scrobogna.
\newblock On an asymptotic model for free boundary darcy flow in porous media.
\newblock {\em SIAM Journal on Mathematical Analysis}, 52(5):4937--4970, 2020.

\bibitem{greenspan1972models}
H. P. Greenspan.
\newblock Models for the growth of a solid tumor by diffusion.
\newblock {\em Studies in Applied Mathematics}, 51(4):317--340, 1972.

\bibitem{greenspan1976growth}
H. P. Greenspan.
\newblock On the growth and stability of cell cultures and solid tumors.
\newblock {\em Journal of theoretical biology}, 56(1):229--242, 1976.

\bibitem{hillen2013convergence}
T. Hillen, K. J. Painter, and M. Winkler.
\newblock Convergence of a cancer invasion model to a logistic chemotaxis
  model.
\newblock {\em Mathematical Models and Methods in Applied Sciences},
  23(01):165--198, 2013.

\bibitem{kim2004three}
J.-B. Kim, R. Stein, and M.-J. O'hare.
\newblock Three-dimensional in vitro tissue culture models of breast cancer—a
  review.
\newblock {\em Breast cancer research and treatment}, 85(3):281--291, 2004.

\bibitem{lannes2013water}
D. Lannes.
\newblock {\em The water waves problem: mathematical analysis and asymptotics},
  volume 188.
\newblock American Mathematical Soc., 2013.

\bibitem{lima2016selection}
E. Lima, J. T. Oden, D. A. Hormuth, T.E.~Yankeelov, and R.C.~Almeida.
\newblock Selection, calibration, and validation of models of tumor growth.
\newblock {\em Mathematical Models and Methods in Applied Sciences},
  26(12):2341--2368, 2016.

\bibitem{lowengrub2009nonlinear}
J. S. Lowengrub, H.~B. Frieboes, F. Jin, Y.-L. Chuang, X. Li,
  P. Macklin, S. M. Wise, and V. Cristini.
\newblock Nonlinear modelling of cancer: bridging the gap between cells and
  tumours.
\newblock {\em Nonlinearity}, 23(1):R1, 2009.

\bibitem{mueller1997three}
W. Mueller-Klieser.
\newblock Three-dimensional cell cultures: from molecular mechanisms to
  clinical applications.
\newblock {\em American Journal of Physiology-Cell Physiology},
  273(4):C1109--C1123, 1997.

\bibitem{naumov2010influence}
L. Naumov, A. Hoekstra, and P. Sloot.
\newblock The influence of mitoses rate on growth dynamics of a cellular
  automata model of tumour growth.
\newblock {\em Procedia Computer Science}, 1(1):971--978, 2010.

\bibitem{oden2010general}
J. T. Oden, A. Hawkins, and S. Prudhomme.
\newblock General diffuse-interface theories and an approach to predictive
  tumor growth modeling.
\newblock {\em Mathematical Models and Methods in Applied Sciences},
  20(03):477--517, 2010.

\bibitem{oden2016toward}
J. T. Oden, E. Lima, R. Almeida, Y. Feng,
  M. Nichole Rylander, D. Fuentes, D. Faghihi, M. M. Rahman,
  M. DeWitt, M. Gadde, et~al.
\newblock Toward predictive multiscale modeling of vascular tumor growth:
  computational and experimental oncology for tumor prediction.
\newblock {\em Archives of Computational Methods in Engineering}, 23:735--779,
  2016.

\bibitem{painter2011spatio}
K. J. Painter and T. Hillen.
\newblock Spatio-temporal chaos in a chemotaxis model.
\newblock {\em Physica D: Nonlinear Phenomena}, 240(4-5):363--375, 2011.

\bibitem{rahman2017fully}
M. Mamunur Rahman, Y. Feng, T. E. Yankeelov, and J. T. Oden.
\newblock A fully coupled space--time multiscale modeling framework for
  predicting tumor growth.
\newblock {\em Computer methods in applied mechanics and engineering},
  320:261--286, 2017.

\bibitem{rocha2018hybrid}
H. Rocha, R. Almeida, E. Lima, A. Resende, J. Oden, and T. Yankeelov.
\newblock A hybrid three-scale model of tumor growth.
\newblock {\em Mathematical Models and Methods in Applied Sciences},
  28(01):61--93, 2018.

\bibitem{stinner2014global}
C. Stinner, C. Surulescu, and M. Winkler.
\newblock Global weak solutions in a pde-ode system modeling multiscale cancer
  cell invasion.
\newblock {\em SIAM Journal on Mathematical Analysis}, 46(3):1969--2007, 2014.

\bibitem{sutherland1971growth}
R. M. Sutherland, J. McCredie, and W. Rodger Inch.
\newblock Growth of multicell spheroids in tissue culture as a model of nodular
  carcinomas.
\newblock {\em Journal of the National Cancer Institute}, 46(1):113--120, 1971.

\bibitem{szymanska2009mathematical}
Z. Szyma{\'n}ska, C.-M. Rodrigo, M. Lachowicz, and
  M. A. J. Chaplain.
\newblock Mathematical modelling of cancer invasion of tissue: the role and
  effect of nonlocal interactions.
\newblock {\em Mathematical Models and Methods in Applied Sciences},
  19(02):257--281, 2009.

\bibitem{tao2008global}
Y. Tao and M. Wang.
\newblock Global solution for a chemotactic--haptotactic model of cancer
  invasion.
\newblock {\em Nonlinearity}, 21(10):2221, 2008.

\bibitem{wang2015simulating}
Z. Wang, J. D. Butner, R. Kerketta, V. Cristini, and T.
  Deisboeck.
\newblock Simulating cancer growth with multiscale agent-based modeling.
\newblock In {\em Seminars in cancer biology}, volume~30, pages 70--78.
  Elsevier, 2015.

\bibitem{wang2007simulating}
Z. Wang, L. Zhang, J. Sagotsky, and T. Deisboeck.
\newblock Simulating non-small cell lung cancer with a multiscale agent-based
  model.
\newblock {\em Theoretical Biology and Medical Modelling}, 4(1):1--14, 2007.

\bibitem{zhigun2016global}
A. Zhigun, C. Surulescu, and A. Uatay.
\newblock Global existence for a degenerate haptotaxis model of cancer
  invasion.
\newblock {\em Zeitschrift f{\"u}r angewandte Mathematik und Physik}, 67:1--29,
  2016.

\end{thebibliography}
\end{document}